\documentclass[a4paper,twoside,12pt]{article}

\usepackage[T1]{fontenc}
\usepackage{lmodern}
\usepackage{microtype}
\usepackage{amsmath,amssymb,amsthm,mathtools}
\usepackage{enumitem}
\usepackage{aliascnt}
\usepackage[top=2cm,bottom=2.5cm,left=2.5cm,right=2.5cm]{geometry}
\usepackage[colorlinks=true,linkcolor=blue,citecolor=blue,urlcolor=blue]{hyperref}
\usepackage[nameinlink,capitalize,noabbrev]{cleveref}

\hypersetup{
  pdftitle={Dense Ascending Waves and the Alon--Spencer Conjecture},
  pdfauthor={Yaping Mao},
  pdfsubject={The asymptotic order of ascending waves in half-dense subsets of the integers},
  pdfkeywords={ascending waves, dense subsets of integers, Ramsey theory, phase-space dynamics, extremal combinatorics}
}

\numberwithin{equation}{section}
\newcommand{\N}{\mathbb N}
\newcommand{\R}{\mathbb R}
\newcommand{\1}{\mathbf 1}
\newcommand{\cD}{\mathcal D}
\newcommand{\cE}{\mathcal E}
\newcommand{\cL}{\mathcal L}
\newcommand{\cR}{\mathcal R}
\newcommand{\cG}{\mathcal G}
\newcommand{\cX}{\mathcal X}
\newcommand{\lambdaTwo}{\lambda_2}

\theoremstyle{plain}
\newtheorem{theorem}{Theorem}[section]

\newaliascnt{lemma}{theorem}
\newtheorem{lemma}[lemma]{Lemma}
\aliascntresetthe{lemma}

\newaliascnt{conjecture}{theorem}
\newtheorem{conjecture}[conjecture]{Conjecture}
\aliascntresetthe{conjecture}

\newaliascnt{proposition}{theorem}
\newtheorem{proposition}[proposition]{Proposition}
\aliascntresetthe{proposition}

\newaliascnt{corollary}{theorem}
\newtheorem{corollary}[corollary]{Corollary}
\aliascntresetthe{corollary}

\theoremstyle{remark}
\newaliascnt{remark}{theorem}
\newtheorem{remark}[remark]{Remark}
\aliascntresetthe{remark}

\begin{document}

\title{\textbf{Dense ascending waves: A resolution of the Alon--Spencer conjecture}\thanks{Supported by the National Natural Science Foundation of China (Nos.~12471329 and 12061059).}}

\author{
Yaping Mao\thanks{Academy of Plateau Science and Sustainability, and School of Mathematics and Statistics, Qinghai Normal University, Xining, Qinghai 810008, China. E-mail: \texttt{yapingmao@outlook.com}; \texttt{myp@qhnu.edu.cn}}}
\date{}
\maketitle

\begin{abstract}
For a positive integer $n$, write $[n]=\{1,\ldots,n\}$. A strictly increasing sequence of integers $x_1<\cdots<x_k$ is an \emph{ascending wave} if its consecutive differences are nondecreasing. Let $g(n)$ be the largest integer $k$ such that every set $A\subseteq[n]$ with $|A|\ge n/2$ contains an ascending wave of length $k$. Alon and Spencer proved that
\[
 c_1\frac{(\log n)^2}{\log\log n}\le g(n)\le c_2(\log n)^2
\]
for all sufficiently large $n$, and they conjectured that the factor $\log\log n$ in the lower bound can be removed. In this paper, we confirm their conjecture.

\medskip
\noindent\textbf{Keywords:} ascending waves; dense subsets of integers; Ramsey theory; phase-space dynamics; extremal combinatorics.

\smallskip
\noindent\textbf{2020 Mathematics Subject Classification:} 05D10; 11B05; 11B25.
\end{abstract}

\section{Introduction}\label{sec:introduction}

Throughout the paper, $\N=\{1,2,\ldots\}$, $[n]=\{1,\ldots,n\}$ for $n\in\N$, and $\log$ denotes the natural logarithm. If $S$ is a finite set, then $|S|$ denotes its cardinality; if $I\subseteq\R$ is an interval, then $|I|$ denotes its Euclidean length. The length of a finite sequence is the number of its terms.

A sequence of integers
$x_0<x_1<\cdots<x_m$
is an \emph{ascending wave} if
\[
 x_t-x_{t-1}\le x_{t+1}-x_t\qquad(1\le t<m).
\]
Thus the consecutive differences form a nondecreasing sequence. Sequences of one or two terms are ascending waves by convention, because there is no inequality to check. For example, $2,5,9,14$ is an ascending wave: its consecutive differences are $3,4,5$.

For $n\in\N$, let $g(n)$ be the largest integer $k$ with the following property: every set $A\subseteq[n]$ satisfying $|A|\ge n/2$ contains an ascending wave of length $k$. In this terminology, $A$ has density at least one half in $[n]$. We use the standard notation $f(n)=\Theta(h(n))$ to mean that there are constants $c,C>0$ and $n_0$ such that $c h(n)\le f(n)\le C h(n)$ for every $n\ge n_0$.

The study of wave configurations originates in work of Brown, Erd\H{o}s and Freedman~\cite{BEF}. Alon and Spencer~\cite{AS} subsequently proved that, for absolute constants $c_1,c_2>0$,
\[
 c_1\frac{(\log n)^2}{\log\log n}\le g(n)\le c_2(\log n)^2
\]
for all sufficiently large $n$. They conjectured that the logarithmic loss in the lower bound is not intrinsic. 

\begin{conjecture}[\cite{AS}]\label{ascon}
$g(n)=\Theta((\log n)^2)$.  
\end{conjecture}

Related variants impose strict inequalities between consecutive differences, zero-sum conditions, prescribed permutation patterns, or multicolour constraints; see~\cite{BBCY,BERJ,Cong,LR,LSR,RCDX}.

Alon and Spencer also identified the main difficulty in a direct greedy analysis: projection errors created at early times can focus many tentative trajectories into a later gap, so hitting events at different times cannot simply be treated as independent~\cite{AS}. Our proof addresses this obstruction by retaining the projection error as a second state variable. The resulting lift is invertible and area preserving, and repeated active hits on a fixed gap become disjoint image slices in phase space. Thus the focusing phenomenon is controlled by deterministic packing rather than by an independence assumption.

The purpose of this paper is to prove the conjectured lower bound.

\begin{theorem}\label{thm:main}
There is an absolute constant $c>0$ such that, for every $n\in\N$, every set $A\subseteq[n]$ with $|A|\ge n/2$ contains an ascending wave of length at least $c(\log n)^2$.
\end{theorem}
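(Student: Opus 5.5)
We build the wave in $L\asymp\log n$ scales. For each scale we extract about $\log n$ consecutive terms whose differences all lie in a dyadic range $[D_j,2D_j)$, with $D_{j+1}\ge 2D_j$. Concatenating the blocks then gives a nondecreasing sequence of differences and length $\gg(\log n)^2$. We may assume $n$ is large, since for small $n$ the statement holds trivially once $c$ is small.

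\textbf{Step 1: reduction to a dense window.} We look for windows $W_j$ of length $\asymp D_j\log n$, placed left to right, on which $A$ still has density bounded below. At a given scale, with common difference near $D$, the wave may only take roughly $\log n$ steps, because the total length of a block must stay comparable to $D\log n$. We therefore need $D_j$ to range over geometric scales from $1$ up to $n^{1-\epsilon}$. By a density-increment and pigeonhole argument we can discard scales on which $A$ is too sparse, and we still keep a positive proportion of the $\log n$ scales. This is the step where Alon and Spencer lose the factor $\log\log n$. They need every block to succeed with high probability, and a union bound over the scales costs that factor.

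\textbf{Step 2: a greedy wave inside one window, with the error tracked.} Inside a window we run the greedy rule: from the current term $x$ with last difference $d$, take the next element of $A$ at or after $x+d$. We record the state $(x,\delta)$, where $\delta\in[0,1)$ is the normalized overshoot, that is, the projection error relative to the grid of spacing $D$. The map $(x,\delta)\mapsto(x',\delta')$ is invertible and measure preserving once we average over a uniformly random starting offset. The key consequence is this. Fix a gap of $A$ of length $\ell$, and consider the event that the trajectory lands in that gap at times $t_1<t_2<\cdots$. These events correspond to pairwise disjoint slices of phase space, each of measure about $\ell/D$. Summing over all gaps, the expected number of \emph{long} overshoots is at most the density of the complement of $A$, which is at most $1/2$ times the number of steps. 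This holds with no independence assumption. Because overshoots are bounded on average, the difference grows by at most a constant factor over $\asymp\log n$ steps with positive probability, so the block stays in the dyadic range.

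\textbf{Step 3: assembly.} We choose the random offset in each block in turn. By Markov's inequality each block succeeds with probability bounded below, and the packing bound in Step 2 controls the expected total loss across blocks additively rather than multiplicatively. Linearity of expectation then yields a choice in which a constant fraction of the $\asymp\log n$ blocks each contribute $\gg\log n$ terms. We join consecutive successful blocks through one transitional step, which is possible because $D_{j+1}\ge 2D_j$ and the windows are ordered. The main obstacle is Step 2: we must show rigorously that the lifted map is injective, so that repeated hits on one gap really are disjoint slices, and that discretization near the boundaries of the window loses only a constant factor. I would first try to prove this with a direct counting argument over pairs (starting offset, time).
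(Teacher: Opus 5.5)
Steps 1 and 2 have a genuine gap: density of $A$ on a window is the wrong hypothesis, and counting hits does not control the difference. Over a block the difference grows by the total overshoot. Staying within a constant factor of $D$ for $L\asymp\log n$ steps therefore needs total overshoot $O(D)$, i.e.\ average overshoot $O(D/L)$ per step. Your bound gives an expected $\Theta(L)$ gap hits, and in a window of density $1/2$ a single hit can cost $\Theta(D)$.

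The local claim itself fails under density alone. Suppose $A$ on the window is a random union of blocks of length $h=\lfloor D/10\rfloor$. Then typically about half the steps overshoot by $\Theta(h)$, and after $L$ steps the difference is $\Theta(LD)$, far outside $[D,2D)$.

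What the disjoint-slices argument really gives is a \emph{weighted} bound. The initial conditions that hit a fixed gap $G$ at some time occupy total area $O(D|G|)$ in a box of area $\asymp D^2$, and each such hit costs at most $|G|$. So the mean total overshoot is $O\bigl(\sum_{G\cap J\ne\emptyset}|G|^2/D\bigr)$. Hence the local hypothesis must be the paper's: no gap of length $\ge D$ meets $J$, and $\cE_D^{<}(J)\le\varepsilon D$. When the complement has positive density in the window, this forces typical gaps to have length $O(D/L)$, and a set of density $1/2$ can violate it at any prescribed scale.

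Step 1 then has to select scales by this energy, not by a density increment. The mechanism is a telescoping count. A gap of length $h$ contributes $h^2/D_i$ only at scales $D_i>h$, in total less than $2h$, so $\sum_iW_i\le 2n$ for $W_i=\sum_{|G|<D_i}|G|^2/D_i$. An analogous geometric count bounds the large-gap pressure by $2n$. Markov's inequality then shows that on at least half the scales only $O(Ln/N)$ starting points violate the gap conditions. This is exactly why $L=c_*\log n$ with small $c_*$ is affordable.

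Two further steps also fail as written.

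\emph{Invertibility.} The greedy map on projected states, $(x,d)\mapsto(R(x+d),R(x+d)-x)$, is not injective: for fixed $x$, all $d$ with $x+d$ in one gap have the same image. So repeated hits are not disjoint slices for it. The paper instead keeps the unprojected position and doubles the overshoot, $T(u,d)=(u+d,\,d+2\rho(u+d))$, which is an area-preserving bijection. Lemma~\ref{lem:projection} then shows that the projections of a shadow orbit still form an ascending wave, because their second differences equal $\rho(u_{t+1})+\rho(u_{t-1})\ge0$.

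\emph{Splicing.} Windows chosen separately and ``placed left to right'' cannot be joined by one transitional step. That step must lie between the last difference of block $j$ and the first difference of block $j+1$. So a good scale-$D_{j+1}$ window must start within $O(D_{j+1})$ of the endpoint of block $j$, which depends on the choices made in block $j$. The paper avoids this, with no probability in the assembly:
\begin{enumerate}[label=(\roman*),leftmargin=9mm]
\item It double counts admissible pairs $(x,i)$ to find one $x^*\in A$ admissible at $\ge N/8$ scales.
\item It uses scales in one residue class mod $7$, so consecutive scales differ by a factor $\ge 2^7$ and all earlier blocks lie within $LD_{i_r}$ of $x^*$.
\item Each new local window therefore sits inside $[x^*,x^*+T_{i_r}]$, where Theorem~\ref{thm:local} applies deterministically.
\end{enumerate}
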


Together with the upper bound of Alon and Spencer~\cite{AS}, Theorem \ref{thm:main} implies $g(n)=\Theta((\log n)^2)$, proving Conjuecture \ref{ascon}.

In Section \ref{sec:dynamics} we introduce the projection and the phase-space dynamics. The local square-energy theorem is proved in Section \ref{sec:local}. The global scale selection and the multiscale splice are carried out in Section \ref{sec:global}, where Theorem \ref{thm:main} is completed.

\section{Projection and phase-space dynamics}\label{sec:dynamics}

Fix a nonempty finite set
 $A=\{a_1<a_2<\cdots<a_m\}\subseteq\mathbb Z$.
A \emph{gap of $A$} is an open interval
\[
 G=(a_r,a_{r+1})\qquad(1\le r<m),
\]
and its length is $|G|=a_{r+1}-a_r$. The word ``gap'' always refers to such an interval between two consecutive elements of $A$; the portions of the real line to the left of $a_1$ and to the right of $a_m$ are not called gaps.

For every real number $z\le a_m$, define the \emph{right projection of $z$ onto $A$} by
$
 R(z)=\min\{a\in A:a\ge z\}$,
and define the corresponding \emph{overshoot} by
 $\rho(z)=R(z)-z$.
Thus $R(z)=z$ and $\rho(z)=0$ when $z\in A$. If $z\le a_1$, then $R(z)=a_1$ and $\rho(z)=a_1-z$. If $z\in(a_r,a_{r+1})$, then $R(z)=a_{r+1}$ and $\rho(z)=a_{r+1}-z$. In particular,
$0<\rho(z)<|G|$ whenever $z\in G$.

The following lemma gives a condition under which the right projections form an ascending wave.

\begin{lemma}\label{lem:projection}
Let $u_0<u_1<\cdots<u_L$ be real numbers for which $R(u_t)$ is defined, and put $d_t=u_{t+1}-u_t$ for $0\le t<L$. Suppose that the projected points $R(u_0),\ldots,R(u_L)$ are strictly increasing and that
\[
 d_t-d_{t-1}=2\rho(u_t)\qquad(1\le t<L).
\]
Then $R(u_0),R(u_1),\ldots,R(u_L)$ is an ascending wave.
\end{lemma}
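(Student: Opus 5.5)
Write $x_t=R(u_t)=u_t+\rho(u_t)$ and $\rho_t=\rho(u_t)\ge 0$. The projected points are strictly increasing by hypothesis, so it only remains to check the difference inequality. For $1\le t<L$ we must show
\[
 x_t-x_{t-1}\le x_{t+1}-x_t .
\]
The plan is to express the projected differences through the unprojected gaps $d_t$ and the overshoots. Then the hypothesis $d_t-d_{t-1}=2\rho_t$ will absorb the error terms exactly.

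First compute the projected differences:
\[
 x_t-x_{t-1}=d_{t-1}+\rho_t-\rho_{t-1},\qquad x_{t+1}-x_t=d_t+\rho_{t+1}-\rho_t .
\]
Subtracting the first from the second gives
\[
 (x_{t+1}-x_t)-(x_t-x_{t-1}) = (d_t-d_{t-1})-2\rho_t+\rho_{t+1}+\rho_{t-1}.
\]
By the hypothesis, $d_t-d_{t-1}-2\rho_t=0$, so this difference equals $\rho_{t+1}+\rho_{t-1}$. That quantity is nonnegative, because every overshoot $\rho(z)=R(z)-z$ is nonnegative by the definition of the right projection. This gives the required inequality for each $1\le t<L$.

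There is no genuine obstacle; the argument is a direct computation. The only points needing care are that $\rho_{t\pm1}$ is defined, which holds because $R(u_{t\pm1})$ is defined by assumption (the indices $t-1$ and $t+1$ lie in $\{0,\dots,L\}$), and that strict increase of the $x_t$ is an assumption of the lemma rather than something to be derived. The cases $L\le1$ are vacuous by the convention on short sequences.
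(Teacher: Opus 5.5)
Your proposal is correct and follows essentially the same approach as the paper. Both write $x_t=u_t+\rho(u_t)$, expand the second difference of the projected sequence, and use the hypothesis $d_t-d_{t-1}=2\rho(u_t)$ to reduce it to $\rho(u_{t+1})+\rho(u_{t-1})\ge0$.
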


\begin{proof}
Write $x_t=R(u_t)=u_t+\rho(u_t)$. For $0\le t<L$,
\[
 x_{t+1}-x_t=d_t+\rho(u_{t+1})-\rho(u_t).
\]
Consequently, for $1\le t<L$,
\begin{align*}
 (x_{t+1}-x_t)-(x_t-x_{t-1})
 &=d_t-d_{t-1}+\rho(u_{t+1})-2\rho(u_t)+\rho(u_{t-1})\\
 &=\rho(u_{t+1})+\rho(u_{t-1})\ge0.
\end{align*}
Thus the consecutive differences of the projected sequence are nondecreasing.
\end{proof}

A point $p=(u,d)\in\R^2$ will be called a \emph{phase point}; $u$ is its position coordinate and $d$ is its difference coordinate. Set
\[
 \cD=\{(u,d)\in\R^2:u+d\le a_m\},
 \qquad
 \cX=(-\infty,a_m]\times\R.
\]
The \emph{shadow map} is the map $T:\cD\to\cX$ given by
 $T(u,d)=\bigl(u+d,\ d+2\rho(u+d)\bigr)$.
If $T^t p=(u_t,d_t)$ and $T^{t+1}p=(u_{t+1},d_{t+1})$, then
\begin{equation}\label{eq:shadow-recurrence}
 u_{t+1}=u_t+d_t,
 \qquad
 d_{t+1}=d_t+2\rho(u_{t+1}).
\end{equation}
In particular, $d_t-d_{t-1}=2\rho(u_t)$ for $t\ge1$.

As usual, $T^0$ is the identity map. For an integer $r\ge0$, we say that a phase point $p$ \emph{admits $r$ legal applications of $T$} if
\[
 T^jp\in\cD\qquad(0\le j<r).
\]
In this case $T^rp$ is defined; every phase point admits zero legal applications by convention. All subsets of $\R^2$ called measurable below are Lebesgue measurable, and $\lambdaTwo(B)$ denotes the two-dimensional Lebesgue measure, or area, of a measurable set $B\subseteq\R^2$. A Borel set is a member of the $\sigma$-algebra generated by the open subsets of $\R^2$.

We next record the basic invertibility, measure-preservation, and rollback
properties of the shadow map.
\begin{proposition}\label{prop:dynamics}
The shadow map $T$ is a Borel bijection from $\cD$ onto $\cX$, with inverse
$$
 T^{-1}(z,e)=\bigl(z-e+2\rho(z),\ e-2\rho(z)\bigr).
$$
If $B\subseteq\cD$ is measurable, then $T(B)$ is measurable and
$\lambdaTwo(T(B))=\lambdaTwo(B)$.
More generally, if every point of a measurable set $B$ admits $r$ legal applications of $T$, then $T^r(B)$ is measurable and
$\lambdaTwo(T^r(B))=\lambdaTwo(B)$.
Finally, if $s\ge t\ge0$ are integers, $p$ admits $t$ legal applications of $T$, $p'$ admits $s$ legal applications of $T$, and $T^t p=T^s p'$, then
\begin{equation}\label{eq:rollback}
 p=T^{s-t}p'.
\end{equation}
\end{proposition}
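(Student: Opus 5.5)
I will factor $T$ as a composition of two simple maps, each of which is visibly a Borel bijection preserving area. The first is the shear $S(u,d)=(u+d,d)$, which maps $\cD$ bijectively onto $\cX$. It is linear with determinant $1$. The second is the map $F(z,e)=(z,e+2\rho(z))$ on $\cX$, which is a fibrewise translation: on each vertical line $\{z\}\times\R$ it translates by the constant $2\rho(z)$. Then $T=F\circ S$. The explicit inverse follows at once. $F^{-1}(z,e)=(z,e-2\rho(z))$ and $S^{-1}(v,f)=(v-f,f)$, so
$$
T^{-1}(z,e)=S^{-1}\bigl(z,e-2\rho(z)\bigr)=\bigl(z-e+2\rho(z),\,e-2\rho(z)\bigr),
$$
matching the stated formula. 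I will also check directly that $T^{-1}$ lands in $\cD$: its two coordinates sum to $z\le a_m$.

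\textbf{Borel measurability.} The function $\rho$ on $(-\infty,a_m]$ is piecewise affine with finitely many pieces: on $(-\infty,a_1]$ and on each interval $(a_r,a_{r+1}]$ it equals $a_{r+1}-z$ (respectively $a_1-z$), with $\rho=0$ on $A$. Hence $\rho$ is Borel, and so are $F$, $F^{-1}$, $S$ and $S^{-1}$. So $T$ and $T^{-1}$ are Borel, and images of Borel sets under $T$ are Borel (they are preimages under $T^{-1}$).

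\textbf{Area preservation.} For the shear this is the change of variables for a linear map of determinant one. For $F$, I will use Fubini/Tonelli: for Borel $B\subseteq\cX$, each vertical section of $F(B)$ is a translate of the corresponding section of $B$, so the one-dimensional measures of the sections agree. Integrating over $z$ gives $\lambdaTwo(F(B))=\lambdaTwo(B)$. Alternatively, one can note that on each of the finitely many strips $F$ is affine with Jacobian determinant $1$.

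\textbf{Lebesgue measurable sets.} This is the only delicate point. I will handle it by writing a measurable $B$ as $B=B_0\cup N$ with $B_0$ Borel and $N$ contained in a Borel null set $N'$. Then $T(B)=T(B_0)\cup T(N)$, where $T(N)\subseteq T(N'\cap\cD)$, which is Borel of measure $0$ by the Borel case. Completeness of Lebesgue measure then gives measurability of $T(B)$ and the equality of areas.

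\textbf{Iterates.} For $T^r$, I will induct on $r$. If every point of $B$ admits $r$ legal applications, then $T^{j}(B)\subseteq\cD$ for $j<r$, so the single-step result applies at each stage.

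\textbf{Rollback.} Since $p'$ admits $s$ legal applications, it admits $s-t$ of them. Put $q=T^{s-t}p'$. Then $q$ admits $t$ legal applications and $T^tq=T^sp'=T^tp$. Because $T$ is injective on $\cD$, and all intermediate points $T^jp$ and $T^jq$ for $j<t$ lie in $\cD$, I can peel off one application of $T$ at a time: $T^{t-1}p=T^{t-1}q$, and so on down to $p=q$.

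The main obstacle, mild as it is, is the passage from Borel to Lebesgue-measurable sets in the measure-preservation claim, which the null-set argument above resolves.
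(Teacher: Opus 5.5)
Your proposal is correct and follows essentially the same route as the paper: factor $T=F\circ S$ into the unimodular shear $S$ and the fibrewise translation $F$, read off $T^{-1}$, iterate the one-step statement, and undo one legal application at a time for the rollback. The only differences are technical. You get area preservation of $F$ from Fubini on vertical fibres and pass from Borel to Lebesgue sets via completeness, whereas the paper splits $\cX$ into the strips $I_r\times\R$, on which $F$ is affine of determinant $1$, and uses bi-Lipschitz invariance of measurability; your version has the small bonus of not needing $\rho$ to be piecewise affine.
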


\begin{proof}
For $(u,d)\in\cD$, define
$S(u,d)=(u+d,d)$,
and, for $(z,e)\in\cX$, define
 $F(z,e)=(z,e+2\rho(z))$.
Since $(u,d)\in\cD$ implies $u+d\le a_m$, the composition $T=F\circ S$ is well defined and equals the stated shadow map.

The map $S$ is a linear bijection from $\cD$ onto $\cX$ with determinant $1$. Partition $(-\infty,a_m]$ into the disjoint half-open intervals
\[
 I_0=(-\infty,a_1],
 \qquad
 I_r=(a_r,a_{r+1}]\quad(1\le r<m).
\]
On each $I_r$, the function $\rho$ is affine. Hence the restriction of $F$ to $I_r\times\R$ is an invertible affine map with linear part
\[
 \begin{pmatrix}
 1&0\\
 -2&1
 \end{pmatrix},
\]
whose determinant is $1$.

Given $(z,e)\in\cX$, the equations
\[
 z=u+d,
 \qquad
 e=d+2\rho(z)
\]
give
\[
 d=e-2\rho(z),
 \qquad
 u=z-e+2\rho(z).
\]
Therefore $T$ is bijective and
$T^{-1}(z,e)=\bigl(z-e+2\rho(z),\,e-2\rho(z)\bigr)$.
The half-open partition makes the formula single-valued at the points of $A$. Since $\rho$ is Borel, both $T$ and $T^{-1}$ are Borel measurable.

We next justify the assertion for Lebesgue-measurable sets. An invertible affine map is bi-Lipschitz. It therefore maps Borel sets to Borel sets and Lebesgue-null sets to Lebesgue-null sets. Since every Lebesgue-measurable set differs from a Borel set by a null set, every invertible affine map sends Lebesgue-measurable sets to Lebesgue-measurable sets. Its effect on Lebesgue measure is multiplication by the absolute value of its determinant.

Let $B\subseteq\cD$ be measurable. The set $S(B)$ is measurable and has the same area as $B$. Split it into the finitely many measurable pieces
\[
 B_r=S(B)\cap(I_r\times\R)\qquad(0\le r<m).
\]
On each piece, $F$ is an invertible affine map of determinant $1$, so $F(B_r)$ is measurable and
\[
 \lambdaTwo(F(B_r))=\lambdaTwo(B_r).
\]
Because $F$ preserves the first coordinate and the intervals $I_r$ are disjoint, the sets $F(B_r)$ are pairwise disjoint. Hence
\[
 \lambdaTwo(T(B))
 =\sum_{r=0}^{m-1}\lambdaTwo(F(B_r))
 =\sum_{r=0}^{m-1}\lambdaTwo(B_r)
 =\lambdaTwo(B).
\]
Applying this argument successively proves the same statement for every finite legal iterate.

Finally, suppose that $T^tp=T^sp'$ with $s\ge t$. Applying the single-valued inverse $t$ times along the two legal orbit segments gives
$p=T^{s-t}p'$,
as claimed.
\end{proof}

\section{The local square-energy theorem}\label{sec:local}

We now fix the numerical constants used in the local argument:
\begin{equation}\label{eq:local-constants}
 C_{\mathrm{loc}}=20,
 \qquad
 \varepsilon=\frac1{40}.
\end{equation}
These values are chosen only for convenience and are not optimized.

For an interval $I\subseteq\R$ and a scale $D>0$, define the \emph{short-gap square energy of $I$ at scale $D$} by
\begin{equation}\label{eq:short-gap-energy}
 \cE_D^{<}(I)
 =\sum_{\substack{G\cap I\ne\emptyset\\ |G|<D}}
   \frac{|G|^2}{D},
\end{equation}
where the sum ranges over the gaps of $A$. Thus every short gap that intersects $I$ is counted, including a gap that crosses an endpoint of $I$.

Let $D\ge2$ be a real number, let $L\ge1$ be an integer, and let $y\in A$. Define the \emph{local window}
\[
 J=[y,y+C_{\mathrm{loc}}DL]=[y,y+20DL].
\]
We assume throughout this section that $J\subseteq(-\infty,a_m]$ and that every gap of $A$ intersecting $J$ has length less than $D$. Under this assumption, $\cE_D^{<}(J)$ is the sum of $|G|^2/D$ over all gaps meeting $J$.

The initial phase box is
$\cR_0=[y+D/2,y+D]\times[10D,11D].$
Its area is
\begin{equation}\label{eq:box-area}
 \lambdaTwo(\cR_0)=\frac{D^2}{2}.
\end{equation}
Moreover, $\cR_0\subseteq\cD$, because for $(u_0,d_0)\in\cR_0$,
\[
 u_0+d_0\le y+12D\le y+20DL\le a_m.
\]

For $t\ge0$, let $\cL_t\subseteq\cR_0$ be the set of initial phase points that admit $t$ legal applications of $T$. Thus $\cL_0=\cR_0$, and for $p\in\cL_t$ we write
$T^t p=(u_t(p),d_t(p))$.

For $p\in\cL_t$, define the accumulated overshoot
\begin{equation*}
S_t(p)=\sum_{r=1}^t\rho(u_r(p)),
 \qquad S_0(p)=0.
\end{equation*}
The attempted step from time $t-1$ to time $t$ is called \emph{active} when $p\in\cL_{t-1}$ and $S_{t-1}(p)<D$. For a fixed gap $G$ and $1\le t\le L$, the \emph{active hitting set}
\begin{equation*}
 H_t(G)=\{p\in\cL_{t-1}:S_{t-1}(p)<D,\ u_{t-1}(p)+d_{t-1}(p)\in G\}
\end{equation*}
consists of initial conditions in the common phase box $\cR_0$ whose active $t$th tentative position falls in $G$.

The core of the local argument is the following packing estimate.
\begin{lemma}\label{lem:single-gap}
Under the preceding assumptions, every gap $G$ intersecting $J$ satisfies
\begin{equation}\label{eq:single-gap-packing}
\sum_{t=1}^L\lambdaTwo(H_t(G))\le5D|G|.
\end{equation}
\end{lemma}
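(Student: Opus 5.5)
The plan is to push each active hitting set forward to time $t$ with the shadow map and to show that the images are pairwise disjoint subsets of one fixed rectangle of area $5D|G|$. Area preservation in \cref{prop:dynamics} then turns the sum of the areas $\lambdaTwo(H_t(G))$ into the area of a disjoint union inside that rectangle.

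\emph{Step 1 (legality).} Let $p\in H_t(G)$. Then $u_{t-1}(p)+d_{t-1}(p)\in G\subseteq(-\infty,a_m]$, so $T^{t-1}p\in\cD$, and $p$ admits $t$ legal applications of $T$. By \cref{prop:dynamics}, $T^t(H_t(G))$ is measurable and
\[
\lambdaTwo\bigl(T^t(H_t(G))\bigr)=\lambdaTwo(H_t(G)).
\]
Measurability of $H_t(G)$ itself is routine, since $u_{t-1}$, $d_{t-1}$ and $S_{t-1}$ are Borel functions on $\cL_{t-1}$.

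\emph{Step 2 (a common rectangle).} Write $T^tp=(u_t,d_t)$. By definition of $H_t(G)$ we have $u_t=u_{t-1}+d_{t-1}\in G$. Iterating \eqref{eq:shadow-recurrence} gives $d_t=d_0+2S_t(p)$. Here $d_0\in[10D,11D]$ and $S_t=S_{t-1}+\rho(u_t)$. Activity gives $S_{t-1}<D$, and since $u_t\in G$ we have $\rho(u_t)<|G|<D$; the last inequality holds because gaps meeting $J$ are short. Hence $S_t<2D$, and
\[
d_t\in[10D,15D).
\]
So every image $T^t(H_t(G))$ lies in $\overline G\times[10D,15D]$, a rectangle of area $5D|G|$.

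\emph{Step 3 (disjointness across times).} This is the main point, and it is where the rollback identity \eqref{eq:rollback} is used. Suppose $T^tp=T^sp'$ with $p\in H_t(G)$, $p'\in H_s(G)$ and $s>t$. Then \eqref{eq:rollback} gives $p=T^{s-t}p'$, so the position of $p$ equals $u_{s-t}(p')$.

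Along any orbit started in $\cR_0$, the differences are nondecreasing, because $\rho\ge0$. They therefore stay at least $10D>0$, so positions strictly increase. Consequently
\[
u_{s-t}(p')\ge u_0(p')+d_0(p')\ge y+\tfrac D2+10D>y+D.
\]
This contradicts $p\in\cR_0$, whose position is at most $y+D$. So the images at different times are disjoint, and within a fixed time $T^t$ is injective by \cref{prop:dynamics}.

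\emph{Conclusion.} Summing over $t$, the left side of \eqref{eq:single-gap-packing} equals the area of a disjoint union contained in the rectangle of Step 2, hence is at most $5D|G|$.

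The only delicate point I expect is bookkeeping in Step 2. One has to make sure the bound uses $S_{t-1}<D$, as guaranteed by activity, rather than $S_t$. One also has to use $|G|<D$ to absorb the final overshoot, so that the total width comes out as exactly $5D$ and not something larger.
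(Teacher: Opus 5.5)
Your proof is correct and follows the paper's argument. You push each $H_t(G)$ forward by $T^t$, use area preservation, and confine the images to the strip $G\times[10D,15D)$. You then get disjointness from the rollback identity together with the fact that no positive legal iterate of a point of $\cR_0$ returns to $\cR_0$.

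The only difference is minor. You get legality and $\rho(u_t)<D$ directly from $G\subseteq(-\infty,a_m]$ and $|G|<D$. The paper instead first proves the active-window estimate (active tentative positions stay in $J$), which it reuses later in Theorem~\ref{thm:local}.
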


\begin{proof}
We first record the active-window estimate. Suppose that $p\in\cL_{t-1}$ and $S_{t-1}(p)<D$, where $1\le t\le L$. From \eqref{eq:shadow-recurrence},
\begin{equation}\label{eq:d-sum}
 d_r=d_0+2S_r\qquad(0\le r<t).
\end{equation}
Since $d_0\le11D$ and $S_r\le S_{t-1}<D$, we have $d_r<13D$ for $0\le r<t$. Therefore the tentative position at time $t$ satisfies
\begin{align*}
 u_{t-1}+d_{t-1}
 &=u_0+\sum_{r=0}^{t-1}d_r\\
 &<y+D+13tD\\
 &\le y+14LD<y+20LD.
\end{align*}
It is also larger than $y$. Hence it lies in $J$, so it is at most $a_m$ and the $t$th iterate is legal. Denoting this new position by $u_t$, the assumption on gaps meeting $J$ gives $\rho(u_t)<D$. Consequently,
\begin{equation}\label{eq:active-d-bound}
 10D\le d_t=d_0+2S_t<15D.
\end{equation}
This proves, in particular, that an orbit cannot become illegal while its accumulated overshoot is still below $D$.

The legal sets $\cL_t$, the functions $u_t,d_t,S_t$, and the hitting sets $H_t(G)$ are Borel. Indeed, this follows inductively from the piecewise-affine Borel description of $T$: at each finite time one imposes only the Borel inequality $u_{t-1}+d_{t-1}\le a_m$ and then composes with $T$.

Fix a gap $G$ of length $h=|G|<D$. For $p\in H_t(G)$, the $t$th iterate is legal, $u_t\in G$, and \eqref{eq:active-d-bound} gives
\begin{equation}\label{eq:strip-containment}
 T^t(H_t(G))\subseteq G\times[10D,15D).
\end{equation}
By Proposition \ref{prop:dynamics},
\begin{equation}\label{eq:hitting-area-preserved}
 \lambdaTwo(T^t(H_t(G)))=\lambdaTwo(H_t(G)).
\end{equation}

We next show that the image sets in \eqref{eq:strip-containment} are pairwise disjoint. Along every legal orbit starting in $\cR_0$, the difference coordinate never decreases, because $\rho\ge0$; hence $d_j\ge d_0\ge10D$. If $p=(u_0,d_0)\in\cR_0$ admits $r\ge1$ legal applications of $T$, then
\[
 u_r=u_0+\sum_{j=0}^{r-1}d_j\ge u_0+10rD>y+D.
\]
The first coordinate of every point in $\cR_0$ is at most $y+D$, so no positive legal iterate of a point of $\cR_0$ can return to $\cR_0$.

Now suppose that $1\le t<s\le L$ and that
 $T^t p=T^s p'$
for some $p\in H_t(G)$ and $p'\in H_s(G)$. The rollback identity \eqref{eq:rollback} gives $p=T^{s-t}p'$, which is a positive legal return from $\cR_0$ to $\cR_0$, a contradiction. Thus the sets $T^t(H_t(G))$ are pairwise disjoint. They all lie in the strip in \eqref{eq:strip-containment}, whose area is $5Dh$. Using \eqref{eq:hitting-area-preserved}, we obtain
\[
 \sum_{t=1}^L\lambdaTwo(H_t(G))
 =\lambdaTwo\!\left(\bigsqcup_{t=1}^LT^t(H_t(G))\right)
 \le5Dh.
\]
\end{proof}

For a set $E$, let $\mathbf 1_E$ denote its indicator function; that is,
\[
\mathbf 1_E(p)=
\begin{cases}
1,& p\in E,\\
0,& p\notin E.
\end{cases}
\]
We can now convert the packing estimate into a deterministic local wave.

\begin{theorem}\label{thm:local}
Let $D\ge2$, $L\ge1$, $y\in A$, and $J=[y,y+20DL]\subseteq(-\infty,a_m]$. Suppose that every gap intersecting $J$ has length less than $D$ and that
\begin{equation}\label{eq:local-energy-assumption}
 \cE_D^{<}(J)\le\varepsilon D=\frac{D}{40}.
\end{equation}
Then there are points
\[
 y=x_{-1}<x_0<x_1<\cdots<x_L,
 \qquad x_t\in A,
\]
forming an ascending wave and satisfying
\begin{equation}\label{eq:local-difference-bounds}
 \frac D2\le x_0-y<2D,
 \qquad
 9D\le x_{t+1}-x_t\le14D\quad(0\le t<L).
\end{equation}
In particular, the local wave has $L+2$ points, its first difference $x_0-y$ is called the \emph{bridge}, and every later difference is between $9D$ and $14D$.
\end{theorem}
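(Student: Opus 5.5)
The plan is an averaging argument over the initial phase box $\cR_0$. We find a point $p\in\cR_0$ whose orbit stays active for all $L$ steps, meaning $S_L(p)<D$, and we then read off the wave by projection using Lemma~\ref{lem:projection}.

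\emph{Step 1 (energy bound on total overshoot).} For $p\in\cR_0$, consider the truncated accumulated overshoot $\sum_{t=1}^L \rho(u_t(p))\1[\text{step }t\text{ active}]$. An active step $t$ contributes a positive overshoot only if the tentative position $u_{t-1}+d_{t-1}$ lies in some gap $G$, and by the active-window estimate in the proof of Lemma~\ref{lem:single-gap} that gap meets $J$. In that case the overshoot is less than $|G|$. Integrating over $\cR_0$ and applying Lemma~\ref{lem:single-gap} gap by gap gives
\[
\int_{\cR_0}\sum_{t=1}^L \rho(u_t)\1[\text{active}]\,d\lambdaTwo
\le\sum_{G\cap J\ne\emptyset}|G|\sum_{t=1}^L\lambdaTwo(H_t(G))
\le 5D\sum_{G}|G|^2 = 5D^2\,\cE_D^{<}(J)\le \frac{D^3}{8}.
\]
Since $\lambdaTwo(\cR_0)=D^2/2$, the average of this truncated sum is at most $D/4$, so some $p$ has truncated sum below $D$. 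The key observation is that for such a $p$ every step is active. Indeed, if step $t$ were the first inactive step, then $S_{t-1}(p)\ge D$, but $S_{t-1}(p)$ equals the truncated sum restricted to earlier (active) steps, which is less than $D$. Hence $S_L(p)<D$, and by \eqref{eq:active-d-bound} all $L$ iterates are legal, with $10D\le d_t<12D$ for $t\le L$ (because $d_t=d_0+2S_t<11D+2D$, and more precisely $<13D$). Measurability is provided by the Borel remark in Lemma~\ref{lem:single-gap}.

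\emph{Step 2 (constructing the wave).} Set $x_t=R(u_t(p))$ for $0\le t\le L$ and $x_{-1}=y$. We have $x_0=R(u_0)$ with $u_0\in[y+D/2,y+D]$ and overshoot less than $D$, so $D/2\le x_0-y<2D$. For $t\ge0$, $x_{t+1}-x_t=d_t+\rho(u_{t+1})-\rho(u_t)$, which lies in $(10D-D,\,13D+D)$; this gives $9D\le x_{t+1}-x_t\le14D$ and in particular strict monotonicity. Lemma~\ref{lem:projection} applies by \eqref{eq:shadow-recurrence}, so $x_0,\dots,x_L$ is an ascending wave.

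\emph{Step 3 (prepending $y$).} The remaining condition is $x_0-y\le x_1-x_0$, and this holds because $x_0-y<2D<9D\le x_1-x_0$. Every $x_t$ is in $A$, and all positions lie in $J$, so the projections are defined. The main obstacle is Step 1, specifically the first-exit argument showing that a small truncated sum forces all steps to be active. I will handle this by defining the truncated sum with the activity indicator, so that it coincides with $S_{t-1}$ up to the first inactive time.
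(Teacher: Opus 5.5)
Your proof is correct and is essentially the paper's argument: you charge active overshoots to the hitting sets $H_t(G)$, integrate over the initial box using Lemma~\ref{lem:single-gap}, and average. The only difference is cosmetic: you use the uncapped sum of active overshoots with a first-exit argument, where the paper uses the stopped cost $C(p)$ capped at $D$. (Minor slips: $d_t<12D$ should read $d_t<13D$, which is the bound you actually use, and your first-exit step tacitly uses the active-window estimate to get $p\in\cL_{t-1}$.)
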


\begin{proof}
Starting from any $p\in\cR_0$, follow the orbit while the accumulated overshoot is less than $D$, and stop either when it first reaches or exceeds $D$ or after $L$ steps, whichever occurs first. The active-window estimate proved inside Lemma \ref{lem:single-gap} shows that every attempted active step is legal and lies in $J$. Thus every initial point has exactly one of the following two outcomes:
\begin{enumerate}[label=(\roman*),leftmargin=9mm]
\item the threshold $D$ is crossed at some active time $\tau\le L$;
\item the orbit admits $L$ legal applications of $T$ and $S_L<D$.
\end{enumerate}
Define the stopped cost
\[
 C(p)=
 \begin{cases}
 D,&\text{in case \textup{(i)}},\\
 S_L(p),&\text{in case \textup{(ii)}}.
 \end{cases}
\]
All stopped sets and the function $C$ are Borel, because they are determined by finitely many Borel inequalities involving $S_t$.

At every active time $t$, either $u_t\in A$, in which case $\rho(u_t)=0$, or $u_t$ lies in a unique gap $G$ intersecting $J$, in which case $p\in H_t(G)$ and $\rho(u_t)<|G|$. If the threshold is crossed at time $\tau$, the crossing hit is included because activity is tested using the pre-hit sum $S_{\tau-1}<D$. Therefore, pointwise on $\cR_0$,
\begin{equation}\label{eq:pointwise-charge}
 C(p)
 \le\sum_{t=1}^L\sum_{G\cap J\ne\emptyset}|G|\,\1_{H_t(G)}(p).
\end{equation}
Indeed, in case \textup{(ii)} the left-hand side equals the sum of all active overshoots, while in case \textup{(i)} the active overshoots through time $\tau$ sum to $S_\tau\ge D$.

Integrating \eqref{eq:pointwise-charge}, applying Lemma \ref{lem:single-gap} to each gap, and then using \eqref{eq:short-gap-energy}, we obtain
\begin{align*}
 \int_{\cR_0}C(p)\,dp
 &\le\sum_{G\cap J\ne\emptyset}|G|
\sum_{t=1}^L\lambdaTwo(H_t(G))\notag\le5D\sum_{G\cap J\ne\emptyset}|G|^2=5D^2\cE_D^{<}(J).
\end{align*}
By \eqref{eq:box-area} and \eqref{eq:local-energy-assumption}, the average stopped cost is at most
\[
 \frac{5D^2\cE_D^{<}(J)}{D^2/2}
 \le10\varepsilon D
 =\frac D4.
\]
If every initial point crossed the threshold, then $C(p)=D$ throughout $\cR_0$, contradicting this average bound. Hence there is a point $p\in\cR_0$ that admits $L$ legal applications of $T$ and satisfies $S_L(p)<D$.

Fix such a point and write $T^t p=(u_t,d_t)$. The active-window estimate gives $u_t\in J$ and $\rho(u_t)<D$ for $1\le t\le L$; the same bound holds at time $0$ because $u_0\in[y+D/2,y+D]\subseteq J$. Since $S_t\le S_L<D$, \eqref{eq:d-sum} gives
\begin{equation}\label{eq:good-orbit-d}
 10D\le d_t<13D\qquad(0\le t\le L).
\end{equation}
In particular, $u_{t+1}-u_t=d_t>0$, so the shadow positions are strictly increasing.

Set $x_t=R(u_t)$ for $0\le t\le L$. For $0\le t<L$,
\begin{align*}
 x_{t+1}-x_t
 &=d_t+\rho(u_{t+1})-\rho(u_t)\\
 &\ge10D-D=9D,
\end{align*}
and, using \eqref{eq:good-orbit-d},
 $x_{t+1}-x_t<13D+D=14D$.
Thus the projected points are strictly increasing. Moreover, the recurrence \eqref{eq:shadow-recurrence} gives $d_t-d_{t-1}=2\rho(u_t)$ for $1\le t<L$, so Lemma \ref{lem:projection} shows that $x_0,\ldots,x_L$ is an ascending wave.

Finally,
 $x_0-y=(u_0-y)+\rho(u_0)$,
where $u_0-y\in[D/2,D]$ and $0\le\rho(u_0)<D$. Hence $D/2\le x_0-y<2D$. Since $2D\le9D\le x_1-x_0$, adjoining the initial point $y$ preserves the ascending-wave property. This proves \eqref{eq:local-difference-bounds} and the theorem.
\end{proof}

\begin{remark}\label{rem:square-energy}
The square in \eqref{eq:short-gap-energy} comes from two separate factors. A hit on a gap $G$ has overshoot at most $|G|$, while Lemma \ref{lem:single-gap} bounds the total phase-space area of all active hits on $G$ by $5D|G|$. Their product is $O(D|G|^2)$; division by the initial area $D^2/2$ gives the averaged contribution $O(|G|^2/D)$.
\end{remark}

\section{Global scale selection and multiscale splicing}\label{sec:global}

We now return to a set $A\subseteq[n]$ with $|A|\ge n/2$. Write
\[
 A=\{a_1<a_2<\cdots<a_m\}.
\]
For $1\le r<m$, let
$G_r=(a_r,a_{r+1})$
be the $r$th gap of $A$, and define its length by
$|G_r|=a_{r+1}-a_r$.
Let
$\mathcal G(A)=\{G_r:1\le r<m\}$
denote the set of all gaps of $A$. Then
\begin{equation}\label{eq:total-gap-length}
\sum_{G\in\mathcal G(A)}|G|
=\sum_{r=1}^{m-1}(a_{r+1}-a_r)
=a_m-a_1
\le n-1<n.
\end{equation}

Fix the absolute constants
\begin{equation}\label{eq:global-constants}
 q=7,
 \qquad Q=2^q=128,
 \qquad C_{\mathrm{win}}=200,
 \qquad
 c_*=\frac{\varepsilon}{2048C_{\mathrm{win}}}.
\end{equation}
For all sufficiently large $n$, define
\begin{equation*}
 \begin{gathered}
 L=\lfloor c_*\log n\rfloor,
 \qquad
 N=\left\lfloor\log_2\!\left(\frac{n}{64C_{\mathrm{win}}L}\right)\right\rfloor,\\
 D_i=2^i,
 \qquad
 w_i=C_{\mathrm{win}}D_iL,
 \qquad
 T_i=w_i+D_i
 \qquad(1\le i\le N).
 \end{gathered}
\end{equation*}
Here $D_i$ is the working scale, $w_i$ is the main window length, and the additional $D_i$ in $T_i$ is a projection buffer.

We first record the elementary
bounds on these parameters that will be used repeatedly.
\begin{lemma}\label{lem:parameter-bounds}
For all sufficiently large $n$, the parameters above satisfy
\begin{equation}\label{eq:constant-estimates}
 L\ge1,
 \quad N\ge\frac14\log n,
 \quad w_i\le\frac n{64},
 \quad T_i+1\le2w_i
 \qquad(1\le i\le N),
\end{equation}
and
\begin{equation}\label{eq:loss-estimates}
 16C_{\mathrm{win}}\frac LN\le\frac1{16},
 \qquad
 \frac{16C_{\mathrm{win}}}{\varepsilon}\frac LN\le\frac1{16}.
\end{equation}
\end{lemma}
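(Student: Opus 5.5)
The proof is a direct verification from the definitions. Throughout I write $\ell=\log n$ and use the numerical values $\varepsilon=1/40$, $C_{\mathrm{win}}=200$ and $c_*=\varepsilon/(2048C_{\mathrm{win}})=1/16384000$. The order of the checks matters: first $L\ge1$, then the lower bound on $N$, and only then the rest.

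\emph{Step 1: $L\ge1$.} This holds as soon as $c_*\ell\ge1$, that is, for $n\ge e^{1/c_*}$. This is the only place where "sufficiently large" becomes genuinely enormous, and it is allowed.

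\emph{Step 2: $N\ge\frac14\ell$.} Since $L\le c_*\ell$, we have
\[
N\ge\log_2\frac{n}{64C_{\mathrm{win}}c_*\ell}-1=\frac{\ell}{\log 2}-\log_2(\ell)-O(1).
\]
Because $1/\log2>1.44>\frac14$, this exceeds $\frac14\ell$ for large $n$. In particular $N\ge1$, so the range $1\le i\le N$ is nonempty.

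\emph{Step 3: $w_i\le n/64$.} The quantity $w_i$ increases with $i$, so it suffices to treat $i=N$. From the definition, $2^N\le n/(64C_{\mathrm{win}}L)$, and therefore
\[
w_N=C_{\mathrm{win}}L\,2^N\le\frac{n}{64}.
\]

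\emph{Step 4: $T_i+1\le2w_i$.} This is equivalent to $D_i+1\le w_i$. Using $D_i=2^i\ge2$, we get
\[
w_i=200\,D_iL\ge200D_i\ge D_i+1.
\]

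\emph{Step 5: the estimates \eqref{eq:loss-estimates}.} Of the two, the second is the stronger, since $1/\varepsilon=40>1$, so it suffices to prove it. Using $L\le c_*\ell$ and $N\ge\frac14\ell$ from Step 2,
\[
\frac{16C_{\mathrm{win}}}{\varepsilon}\cdot\frac{L}{N}\le\frac{16C_{\mathrm{win}}}{\varepsilon}\cdot\frac{c_*\ell}{\ell/4}=\frac{64C_{\mathrm{win}}c_*}{\varepsilon}=\frac{64}{2048}=\frac1{32}\le\frac1{16}.
\]
This shows where the constant $2048$ in the definition of $c_*$ comes from: it makes this ratio exactly $1/32$.

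\emph{Main obstacle.} Nothing here is deep. The only care needed is in Step 2: handling the floor functions in both $N$ and $L$, including the $\log_2\ell$ correction term, and doing Steps 1 and 2 first so that Step 5 can use them.
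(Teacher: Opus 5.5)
Your proof is correct and follows the paper's argument essentially step by step. You establish $L\ge1$ first, then get $N\ge\frac14\log n$ from the floor bound together with $L\le c_*\log n$, then $w_i\le n/64$ from $2^i\le2^N\le n/(64C_{\mathrm{win}}L)$, then $D_i+1\le w_i$, and finally use $L/N\le4c_*$ to obtain $64C_{\mathrm{win}}c_*/\varepsilon=1/32$. The only cosmetic difference is that you deduce the first loss estimate from the second (since $1/\varepsilon>1$), whereas the paper computes both directly.
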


\begin{proof}
Since $L=\lfloor c_*\log n\rfloor$, we have $L\ge1$ for all sufficiently large $n$ and $L\le c_*\log n$. Moreover,
\[
 N
 \ge \frac{\log n}{\log 2}
    -\frac{\log(64C_{\mathrm{win}}L)}{\log 2}-1
 =\frac{\log n}{\log 2}-O(\log\log n).
\]
Hence $N\ge\frac14\log n$ for all sufficiently large $n$.

For $1\le i\le N$, the definition of $N$ gives
\[
 D_i=2^i\le2^N\le\frac{n}{64C_{\mathrm{win}}L},
\]
and therefore $w_i=C_{\mathrm{win}}D_iL\le n/64$. Since $D_i\ge2$, $L\ge1$, and $C_{\mathrm{win}}=200$, we also have $D_i+1\le w_i$, whence
\[
 T_i+1=w_i+D_i+1\le2w_i.
\]
Finally, $L/N\le4c_*$ by the first two estimates. Thus
\[
 16C_{\mathrm{win}}\frac LN
 \le64C_{\mathrm{win}}c_*
 =\frac{\varepsilon}{32}
 \le\frac1{16},
\]
and
\[
 \frac{16C_{\mathrm{win}}}{\varepsilon}\frac LN
 \le\frac{64C_{\mathrm{win}}c_*}{\varepsilon}
 =\frac1{32}
 \le\frac1{16}.
\]
\end{proof}

For $k\ge0$, let $\cG_k$ be the family of gaps $G$ with $2^k\le|G|<2^{k+1}$,
and set
\[
 u_k=\sum_{G\in\cG_k}|G|.
\]
The classes $\cG_k$ are the \emph{dyadic gap classes}. Because all gap lengths are positive integers, they contain every gap, and \eqref{eq:total-gap-length} gives $\sum_{k\ge0}u_k\le n$.

At scale $D_i$, define the \emph{large-gap pressure}
$$B_i=\sum_{j\ge0}2^{-j}u_{i+j}$$
and the \emph{global short-gap square energy}
 $$W_i=\sum_{G:|G|<D_i}\frac{|G|^2}{D_i}.$$
The pressure controls how many windows can meet gaps of length at least $D_i$, whereas $W_i$ controls the average local square energy contributed by shorter gaps.

A point $x\in A$ is called \emph{admissible at scale $i$} if all three conditions below hold:
\begin{enumerate}[label=(\roman*),leftmargin=9mm]
\item $x+T_i\le a_m$; this is the \emph{right-safety condition};
\item the interval $[x,x+T_i]$ meets no gap of length at least $D_i$;
\item $\cE_{D_i}^{<}([x,x+T_i])\le\varepsilon D_i$.
\end{enumerate}

Then, we find a common starting point that
is admissible at many scales.
\begin{proposition}\label{prop:common-point}
For all sufficiently large $n$, there are a point $x^*\in A$ and a set $I\subseteq[N]$ such that $x^*$ is admissible at every scale $i\in I$ and
\begin{equation}\label{eq:many-scales}
 |I|\ge\frac N8\ge\frac1{32}\log n.
\end{equation}
\end{proposition}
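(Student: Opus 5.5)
We will prove the statement by double counting. For each scale $i\in[N]$ we bound the number of points $x\in A$ that are \emph{not} admissible at scale $i$. Concretely, we aim to show that, for every $i$, the number of $x\in A$ failing one of the conditions (i)--(iii) at scale $i$ is at most $c_i n$, where $\sum_{i=1}^N c_i\le \frac{3}{8}N$ (or better). Granting this, consider $\sum_{x\in A}\#\{i\in[N]:x\text{ admissible at }i\}$. It is at least
\[
N|A|-\sum_i c_in\ge N\frac n2-\frac{3N}{8}n=\frac{Nn}{8}.
\]
Dividing by $|A|\le n$ gives some $x^*$ with at least $N/8$ admissible scales. Combined with $N\ge\frac14\log n$ from Lemma \ref{lem:parameter-bounds}, this yields \eqref{eq:many-scales}.

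\textbf{Condition (i).} The right-safety condition fails only for $x\in(a_m-T_i,a_m]$. There are at most $T_i+1\le 2w_i\le n/32$ such integers, by Lemma \ref{lem:parameter-bounds}. Summed over $i$, this contributes at most $Nn/32$.

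\textbf{Condition (ii).} Fix a gap $G$ with $|G|\ge D_i$. The points $x$ with $[x,x+T_i]\cap G\ne\emptyset$ form an interval of length $|G|+T_i$. Since $x\in A$ and $G$ contains no points of $A$, at most $T_i+1\le 2w_i$ such $x$ exist. So the number of bad $x$ is at most $2w_i\cdot\#\{G:|G|\ge D_i\}$. A gap in $\cG_{i+j}$ has length at least $2^{i+j}$, so $\#\cG_{i+j}\le u_{i+j}2^{-i-j}$. Hence the count is at most
\[
2C_{\mathrm{win}}L\sum_j2^{-j}u_{i+j}=2C_{\mathrm{win}}LB_i.
\]
Summing over $i$ gives $\sum_iB_i=\sum_k u_k\sum_{j\le k}2^{-j}\le 2\sum_ku_k\le 2n$. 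The total is therefore at most $4C_{\mathrm{win}}Ln$, which is at most $Nn/64$ by \eqref{eq:loss-estimates}.

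\textbf{Condition (iii).} We use Markov's inequality. Each short gap $G$ (with $|G|<D_i$) meets $[x,x+T_i]$ for at most $|G|+T_i+1\le 2w_i+D_i\le 3w_i$ values of $x$. Therefore
\[
\sum_{x\in A}\cE^<_{D_i}([x,x+T_i])\le 3w_iW_i.
\]
It follows that the number of $x$ violating (iii) is at most $3w_iW_i/(\varepsilon D_i)=3C_{\mathrm{win}}LW_i/\varepsilon$. Now sum over $i$. For a gap in $\cG_k$, the total $\sum_{i:D_i>|G|}|G|^2/D_i$ is a geometric series of size at most about $2|G|$. Hence $\sum_iW_i\le 2n$, and the total count is at most $6C_{\mathrm{win}}Ln/\varepsilon\le Nn/32$ by \eqref{eq:loss-estimates}.

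Adding the three contributions gives a total of bad incidences far below $\frac38Nn$, which completes the count above.

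The main obstacle is keeping the constants consistent. In particular, we must verify the edge-effect counts: that a gap meeting a window allows at most about $|G|+T_i$ starting positions, and that only elements of $A$ are counted. We must also check that the geometric-series bounds for $\sum_iB_i$ and $\sum_iW_i$ really are $O(n)$ uniformly. These are routine, and the generous margins in \eqref{eq:loss-estimates} absorb all the factors of $2$ and $3$.
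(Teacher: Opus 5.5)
Your proposal is correct and uses the same ingredients as the paper: the touching estimate of at most $T_i+1$ starting points of $A$ per gap, the union bound through the pressure $B_i$, Markov's inequality through $W_i$, the geometric bounds $\sum_i B_i\le 2n$ and $\sum_i W_i\le 2n$, and a final averaging over $A$. The only difference is organizational. The paper first extracts at least $N/2$ good scales with $B_i,W_i\le 8n/N$ and shows each has more than $n/4$ admissible points, whereas you sum the bad incidences over all scales at once, which is slightly shorter and gives a constant larger than $N/8$.
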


\begin{proof}
We first show that many scales have controlled global statistics. A fixed class $u_k$ occurs in $B_i$ only for $i\le k$, with coefficient $2^{-(k-i)}$. Hence its total coefficient in $\sum_{i=1}^NB_i$ is at most $\sum_{r\ge0}2^{-r}=2$, and therefore
\begin{equation}\label{eq:sum-pressure}
 \sum_{i=1}^NB_i\le2\sum_{k\ge0}u_k\le2n.
\end{equation}
For a fixed gap of length $h$, let $i_0$ be the first index with $D_{i_0}>h$, if such an index exists. Then
\[
 \sum_{\substack{1\le i\le N\\D_i>h}}\frac{h^2}{D_i}
 \le\frac{h^2}{D_{i_0}}\sum_{r\ge0}2^{-r}
 <2h.
\]
Summing over all gaps gives
\begin{equation}\label{eq:sum-energy}
 \sum_{i=1}^NW_i\le2\sum_G|G|\le2n.
\end{equation}
It follows from \eqref{eq:sum-pressure} and \eqref{eq:sum-energy} that at least $N/2$ indices satisfy both
$$
 B_i\le\frac{8n}{N},
 \qquad
 W_i\le\frac{8n}{N}.
$$
Call such an index a \emph{good scale}.

Fix a good scale $i$. We estimate the number of points of $A$ that fail one of the three admissibility conditions. First, at most $T_i+1\le2w_i\le n/32$ points fail right safety.

Next consider a gap $G=(a,b)$. If $x\in A$ and $[x,x+T_i]$ intersects $G$, then $x\notin G$, $x<b$, and $x+T_i>a$. Hence
$x\in(a-T_i,a]\cap\mathbb Z$,
so at most $T_i+1\le2w_i$ possible starting points touch $G$. A gap in $\cG_{i+j}$ has length at least $D_i2^j$, so the number of such gaps is at most $u_{i+j}/(D_i2^j)$. By the union bound, the number of points whose window meets a gap of length at least $D_i$ is at most
\begin{align*}
 2w_i\sum_{j\ge0}\frac{u_{i+j}}{D_i2^j}
 &=2C_{\mathrm{win}}LB_i\le16C_{\mathrm{win}}L\frac nN
 \le\frac n{16},
\end{align*}
where the last inequality is \eqref{eq:loss-estimates}.

For the energy condition, count a short gap once for every starting point whose window touches it. The same touching estimate gives
$$
 \sum_{x\in A}\cE_{D_i}^{<}([x,x+T_i])\le2w_iW_i.
$$
By Markov's inequality, the number of $x\in A$ for which this energy exceeds $\varepsilon D_i$ is at most
\begin{align*}
 \frac{2w_iW_i}{\varepsilon D_i}
 &\le\frac{16C_{\mathrm{win}}}{\varepsilon}L\frac nN
 \le\frac n{16}.
\end{align*}
Since $|A|\ge n/2$, after removing the points counted above at least
\[
 \frac n2-\frac n{32}-\frac n{16}-\frac n{16}
 =\frac{11n}{32}>\frac n4
\]
points remain admissible at the fixed good scale.

There are at least $N/2$ good scales, each with at least $n/4$ admissible points. Hence the number of admissible pairs $(x,i)$ is at least $nN/8$. Averaging over the $n$ integers in $[n]$, some point $x^*\in A$ is admissible at at least $N/8$ scales. Let $I$ be the set of those scales. Then $|I|\ge N/8$, and \eqref{eq:constant-estimates} yields \eqref{eq:many-scales}.
\end{proof}

To combine local waves, we use the following terminology. Suppose an existing ascending wave ends at a point $y$, and a new local wave begins with the same point $y$. Their \emph{splice} is the concatenation obtained by writing the common endpoint only once. The first new difference is the bridge of the local wave.

We now convert the multiscale admissibility of $x^*$ into a single long
ascending wave by selecting well-separated scales and splicing the
corresponding local waves.

\begin{proposition}\label{prop:splicing}
Let $x^*$ and $I$ be as in Proposition \ref{prop:common-point}. Then $A$
contains an ascending wave of length at least
\[
\frac{|I|L}{q}.
\]
\end{proposition}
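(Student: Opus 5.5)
The plan is to splice together local waves produced by \cref{thm:local}, all anchored at the single point $x^*$, using a subfamily of scales from $I$ that are spaced far apart. First I thin out $I$ greedily. List $I$ in increasing order, take its smallest element, discard every element less than that element plus $q$, and repeat. Each selected index discards at most $q$ elements of $I$, including itself. This gives indices $i_1<i_2<\cdots<i_K$ in $I$ with $i_{k+1}-i_k\ge q$ and $K\ge |I|/q$. Write $D^{(k)}=D_{i_k}$, so that $D^{(k+1)}\ge Q D^{(k)}=128D^{(k)}$.

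Next I build the wave inductively. Put $y_1=x^*$. Given $y_k\in A$, which will be the last point of the wave built so far, I apply \cref{thm:local} with $D=D^{(k)}$, the same $L$, and $y=y_k$, and set $y_{k+1}=x_L$. To check the hypotheses I need the local window $J_k=[y_k,y_k+20D^{(k)}L]$ to lie inside the admissibility window $[x^*,x^*+T_{i_k}]$. Granting this inclusion:
\begin{itemize}
\item right safety gives $J_k\subseteq(-\infty,a_m]$;
\item condition (ii) gives that every gap meeting $J_k$ has length less than $D^{(k)}$;
\item condition (iii), together with the monotonicity of $\cE^{<}_D$ in the interval, gives $\cE^{<}_{D^{(k)}}(J_k)\le\varepsilon D^{(k)}$.
\end{itemize}
For the inclusion, \eqref{eq:local-difference-bounds} shows that each local wave advances by $y_{j+1}-y_j<2D^{(j)}+14D^{(j)}L\le16D^{(j)}L$. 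Summing this geometric series with ratio at most $1/128$ gives
\[
y_k-x^*\le 17D^{(k-1)}L\le \tfrac{17}{128}D^{(k)}L .
\]
Hence $J_k\subseteq[x^*,x^*+(20+1)D^{(k)}L]\subseteq[x^*,x^*+w_{i_k}]$, since $C_{\mathrm{win}}=200$.

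Then I verify that splicing preserves the ascending property. Inside each local wave this holds by \cref{thm:local}. At the junction $y_{k+1}$, the last difference of the $k$th local wave is at most $14D^{(k)}$. The next difference is the bridge of the $(k+1)$st local wave, which is at least $D^{(k+1)}/2\ge64D^{(k)}$, so the differences stay nondecreasing across the splice. Each local wave contributes $L+1$ new points beyond its starting point $y_k$. The spliced sequence therefore has $K(L+1)+1\ge |I|L/q$ terms, which proves the proposition.

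The main obstacle is the window containment $J_k\subseteq[x^*,x^*+T_{i_k}]$. Admissibility is only known at $x^*$, while the $k$th local wave starts at the drifted point $y_k$. The whole argument depends on the spacing $Q=128$ making the accumulated drift negligible compared with $D^{(k)}L$, and on $C_{\mathrm{win}}=200$ leaving enough slack above the $20D^{(k)}L$ that \cref{thm:local} needs. The bridge condition at the splices also relies on this same scale separation.
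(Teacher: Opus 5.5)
Your proposal is correct and follows essentially the same route as the paper. You thin $I$ to scales spaced at least $q$ apart, apply the local theorem successively starting from $y_1=x^*$, bound the accumulated drift by a geometric series so that each window $J_k$ lies inside $[x^*,x^*+w_{i_k}]$, and splice using $14D_{i_{k-1}}<D_{i_k}/2\le$ bridge. The only cosmetic difference is that the paper takes one residue class of $I$ modulo $q$ instead of thinning greedily, which gives the same spacing and the same count $|I|/q$.
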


\begin{proof}
Recall that
$q=7,
Q=2^q=128$.
Partition $I$ according to the residue classes modulo $q$. At least one
residue class, denoted by $I'$, satisfies
\[
|I'|\ge \frac{|I|}{q}.
\]
Write
$I'=\{i_1<i_2<\cdots<i_s\}$.
Since all indices in $I'$ are congruent modulo $q$, we have
$i_{r+1}-i_r\ge q$
for every $1\le r<s$. As $D_i=2^i$, it follows that
$D_{i_{r+1}}\ge 2^qD_{i_r}=QD_{i_r}$.

We shall apply Theorem \ref{thm:local} successively at the scales
$D_{i_1},D_{i_2},\ldots,D_{i_s}$. Suppose that Theorem \ref{thm:local} is applied at a point $y\in A$ and at
scale $D$. It produces points
\[
y=z_{-1}<z_0<z_1<\cdots<z_L
\]
such that
\[
\frac D2\le z_0-y<2D
\]
and
\[
9D\le z_{t+1}-z_t\le14D
\qquad (0\le t<L).
\]
Consequently,
\begin{align*}
z_L-y
&=(z_0-y)+\sum_{t=0}^{L-1}(z_{t+1}-z_t)\\
&<2D+14LD\\
&\le16LD,
\end{align*}
where the last inequality uses $L\ge1$. Moreover, the last difference
of this local segment satisfies
$z_L-z_{L-1}\le14D$.

We now construct the required wave inductively. Before scale
$D_{i_r}$ is used, let $y_r$ denote the current endpoint. For $r=1$,
set
$y_1=x^*$.
For $r>1$, the point $y_r$ is the final point of the local segment
constructed at scale $D_{i_{r-1}}$.

The total displacement from $x^*$ caused by all scales preceding
$D_{i_r}$ is at most
$16L\sum_{a<r}D_{i_a}$.
The geometric separation of the selected scales gives
\[
D_{i_a}\le Q^{-(r-a)}D_{i_r}
\qquad (a<r).
\]
Therefore,
\begin{align*}
y_r-x^*
&\le16L\sum_{a<r}D_{i_a}
\le16LD_{i_r}\sum_{j=1}^{\infty}Q^{-j}
=\frac{16}{Q-1}LD_{i_r}\le\frac{32}{Q}LD_{i_r}\le LD_{i_r}
\le\frac{w_{i_r}}2.
\end{align*}
Thus,
$x^*\le y_r\le x^*+\frac{w_{i_r}}2$.
Consider the local window
$J_r=[y_r,y_r+20D_{i_r}L]$.

Since
\[
20D_{i_r}L=\frac{w_{i_r}}{10},
\]
we obtain
\[
y_r+20D_{i_r}L
\le x^*+\frac{w_{i_r}}2+\frac{w_{i_r}}{10}
<x^*+w_{i_r}.
\]
Hence
\[
J_r
\subseteq[x^*,x^*+w_{i_r}]
\subseteq[x^*,x^*+T_{i_r}]
\subseteq(-\infty,a_m].
\]

Because $x^*$ is admissible at scale $i_r$, the interval
$[x^*,x^*+T_{i_r}]$ meets no gap of length at least $D_{i_r}$.
Therefore every gap meeting $J_r$ has length less than $D_{i_r}$.
Moreover, every gap meeting $J_r$ also meets
$[x^*,x^*+T_{i_r}]$, and hence
\[
\cE_{D_{i_r}}^{<}(J_r)
\le
\cE_{D_{i_r}}^{<}\bigl([x^*,x^*+T_{i_r}]\bigr)
\le\varepsilon D_{i_r}.
\]

We next verify that the right projections of all tentative positions remain inside
the buffered reservoir $[x^*,x^*+T_{i_r}]$. Let
$u\in[x^*,x^*+w_{i_r}]$.
If $u\in A$, then $R(u)=u$. Otherwise, $u$ lies in a gap meeting
$[x^*,x^*+T_{i_r}]$. This gap has length less than $D_{i_r}$, and
therefore
\[
R(u)=u+\rho(u)
<u+D_{i_r}
\le x^*+w_{i_r}+D_{i_r}
=x^*+T_{i_r}
\le a_m.
\]
Thus all the hypotheses of Theorem \ref{thm:local} hold at the point $y_r$
and scale $D_{i_r}$.

Let the local segment supplied by Theorem \ref{thm:local} be
\[
y_r=z_{r,-1}<z_{r,0}<z_{r,1}<\cdots<z_{r,L}.
\]
Set
$\beta_r=z_{r,0}-y_r$
for its bridge difference, and
\[
\delta_{r,t}=z_{r,t+1}-z_{r,t}
\qquad (0\le t<L)
\]
for its internal differences. Then
\[
\frac{D_{i_r}}2\le\beta_r<2D_{i_r}
\]
and
\[
9D_{i_r}\le\delta_{r,t}\le14D_{i_r}
\qquad (0\le t<L).
\]

For $r=1$, the existing sequence consists only of the point $x^*$, so
the local segment can be appended without any compatibility condition.
Now suppose that $r>1$. Let $\Delta_r$ be the last difference of the
wave constructed before scale $D_{i_r}$ is used. This is the last
difference of the local segment constructed at scale
$D_{i_{r-1}}$, and hence
$\Delta_r\le14D_{i_{r-1}}$.
Since
$D_{i_r}\ge QD_{i_{r-1}}$,
we have
\[
\Delta_r
\le\frac{14}{Q}D_{i_r}
<\frac{D_{i_r}}2.
\]
It follows that
\[
\Delta_r
<\frac{D_{i_r}}2
\le\beta_r
<2D_{i_r}
<9D_{i_r}
\le\delta_{r,0}.
\]
Thus the last difference of the previously constructed wave is at most
the new bridge difference, and the bridge difference is at most the
first internal difference of the new local segment. Since the internal
differences $\delta_{r,0},\delta_{r,1},\ldots,\delta_{r,L-1}$
are nondecreasing, appending
$z_{r,0},z_{r,1},\ldots,z_{r,L}$
preserves the ascending-wave property.

After this splice, the new endpoint is $z_{r,L}$, the displacement
created at scale $D_{i_r}$ is at most $16LD_{i_r}$, and the new last
difference is at most $14D_{i_r}$. These are precisely the bounds used
above to continue the induction at scale $D_{i_{r+1}}$.

Each selected scale contributes the $L+1$ new points
$z_{r,0},z_{r,1},\ldots,z_{r,L}$.
In particular, it contributes at least $L$ new points. Therefore the
final ascending wave has length at least
$sL$.
Since
\[
s=|I'|\ge\frac{|I|}{q},
\]
its length is at least
\[
sL\ge\frac{|I|L}{q}.
\]
This proves the proposition.
\end{proof}
\begin{proof}[Proof of Theorem \ref{thm:main}]
For all sufficiently large $n$, apply Propositions \ref{prop:common-point} and \ref{prop:splicing}. By \eqref{eq:many-scales},
\[
 |I|\ge\frac1{32}\log n.
\]
Also, once $c_*\log n\ge2$,
\[
 L=\lfloor c_*\log n\rfloor\ge\frac{c_*}{2}\log n.
\]
Therefore, the wave constructed in Proposition \ref{prop:splicing} has length at least
\[
 \frac{|I|L}{q}
 \ge\frac{c_*}{64q}(\log n)^2.
\]
This proves the theorem for all $n\ge n_0$, for some absolute $n_0$.

For $1\le n<n_0$, every set $A\subseteq[n]$ with $|A|\ge n/2$ is nonempty and hence contains a one-point ascending wave. Set
\[
 c=\min\left\{\frac{c_*}{64q},
 \min_{2\le n<n_0}\frac1{(\log n)^2}\right\},
\]
where the second minimum is omitted if $n_0\le2$. Then $c>0$, and the asserted lower bound holds for every $n\in\N$; for $n=1$ the right-hand side is $0$.
\end{proof}

\begin{corollary}\label{cor:asymptotic}
The dense-set ascending-wave function satisfies
\[
 g(n)=\Theta((\log n)^2).
\]
\end{corollary}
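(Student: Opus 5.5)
The corollary follows by combining two bounds, so the plan is to prove a matching lower and upper bound for $g(n)$ and then read off the $\Theta$ statement.

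\textbf{Lower bound.} I will derive this from Theorem~\ref{thm:main}. Fix $n\ge2$. The theorem says that every $A\subseteq[n]$ with $|A|\ge n/2$ contains an ascending wave of length at least $c(\log n)^2$.

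Any subsequence of consecutive terms of an ascending wave is again an ascending wave, since it still has nondecreasing consecutive differences. So every such $A$ contains an ascending wave of length exactly $k_0=\lceil c(\log n)^2\rceil$. The definition of $g(n)$ is a maximum over $k$, so this gives $g(n)\ge \lceil c(\log n)^2\rceil\ge c(\log n)^2$.

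The only point to be careful about is that $g(n)$ is defined via "contains an ascending wave of length $k$" rather than "of length at least $k$". Truncating a long wave to its first $k$ terms removes this discrepancy.

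\textbf{Upper bound.} I will quote the result of Alon and Spencer~\cite{AS} recalled in the introduction. There are $c_2>0$ and $n_1$ such that $g(n)\le c_2(\log n)^2$ for all $n\ge n_1$. This half requires no new work.

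\textbf{Conclusion.} Take $n_0=\max\{n_1,2\}$ and the constants $c$ and $c_2$. Then
\[
c(\log n)^2\le g(n)\le c_2(\log n)^2 \qquad (n\ge n_0),
\]
which is precisely $g(n)=\Theta((\log n)^2)$ as defined in Section~\ref{sec:introduction}. There is no genuine obstacle here: all the difficulty was already handled in Theorem~\ref{thm:main}.
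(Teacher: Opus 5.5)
Your proposal is correct and follows the paper's proof: the lower bound comes from Theorem~\ref{thm:main} and the upper bound is quoted from Alon and Spencer~\cite{AS}. Your truncation remark, which passes from a wave of length at least $c(\log n)^2$ to one of length exactly $\lceil c(\log n)^2\rceil$ to match the definition of $g(n)$, is a valid detail that the paper leaves implicit.
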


\begin{proof}
The lower bound is Theorem \ref{thm:main}. The matching upper bound is due to Alon and Spencer~\cite{AS}.
\end{proof}


\begin{thebibliography}{99}

\bibitem{AS}
N. Alon and J. H. Spencer,
Ascending waves,
\emph{J. Combin. Theory Ser. A} 52(2) (1989), 275--287.

\bibitem{BBCY}
A. Bialostocki, G. Bialostocki, Y. Caro and R. Yuster,
Zero-sum ascending waves,
\emph{J. Combin. Math. Combin. Comput.} 32 (2000), 103--114.

\bibitem{BERJ}
B. Bollob\'as, P. Erd\H{o}s and G. Jin,
Strictly ascending pairs and waves,
in Y. Alavi and A. Schwenk (eds.), \emph{Graph Theory, Combinatorics, and Algorithms}, Vols.~1--2 (Kalamazoo, MI, 1992), Wiley-Interscience, New York, 1995, 83--95.

\bibitem{BEF}
T. C. Brown, P. Erd\H{o}s and A. R. Freedman,
Quasi-progressions and descending waves,
\emph{J. Combin. Theory Ser. A} 53(1) (1990), 81--95.

\bibitem{Cong}
K. Cong,
On integer sets excluding permutation pattern waves,
arXiv:2308.15695 [math.CO], 2023.

\bibitem{LR}
B. M. Landman and A. Robertson,
Monochromatic strictly ascending waves and permutation pattern waves,
\emph{Adv. Appl. Math.} 146 (2023), 102501.

\bibitem{LSR}
T. LeSaulnier and A. Robertson,
On monochromatic ascending waves,
\emph{Integers} 7(2) (2007), A23.

\bibitem{RCDX}
A. Robertson, C. Cremin, W. Daniel and Q. Xiang,
Intermingled ascending wave $m$-sets,
\emph{Discrete Math.} 339(2) (2016), 560--563.
\end{thebibliography}
\end{document}